\newtheorem{theorem}{Theorem}
\newtheorem{lemma}[theorem]{Lemma}
\newtheorem{remark}[theorem]{Remark}
\numberwithin{theorem}{section}
\numberwithin{equation}{section}
\title{The Borel Distribution: Approximation and Concentration} \author{Fraser Daly\footnote{Department of Actuarial Mathematics and Statistics, Heriot-Watt University, Edinburgh EH14 4AS, UK.  E-mail: f.daly@hw.ac.uk}\, 
and Seva Shneer\footnote{Department of Actuarial Mathematics and Statistics, Heriot-Watt University, Edinburgh EH14 4AS, UK.  E-mail: v.shneer@hw.ac.uk}} \date{\today}
\begin{document}

\maketitle

\noindent{\bf Abstract} 
We develop the tools necessary to use Stein's method for approximation by a Borel distribution, which we illustrate by considering the approximation of the number of customers served in the busy period of an M/G/1 queue. We further derive concentration inequalities for the Borel distribution. Both these sets of results are based on a representation for the size-biased version of a Borel random variable.
\vspace{12pt}

\noindent{\bf Key words and phrases:} Stein's method; size biasing; branching approximation; M/G/1 queue; tail bound

\vspace{12pt}

\noindent{\bf MSC 2020 subject classification:} 60E05; 60E15; 60F05; 60K25; 62E10

\section{Introduction}

The Borel distribution is well-known in the setting of Galton--Watson processes with a Poisson offspring distribution. In such a process, the total progeny $Z$ satisfies
\begin{equation}\label{eq:borelrep}
Z\stackrel{d}{=}1+\sum_{i=1}^\xi Z_i\,,
\end{equation}
where `$\stackrel{d}{=}$' denotes equality in distribution, $Z_1,Z_2,\ldots$ are IID copies of $Z$, and $\xi\sim\mbox{Po}(\lambda)$ has a Poisson distribution with mean $\lambda<1$, independent of the $Z_i$. This random variable $Z$ is said to have a Borel distribution with parameter $\lambda$, written $Z\sim\mbox{Borel}(\lambda)$. The corresponding mass function is
\[
\mathbb{P}(Z=j)=\frac{e^{-\lambda j}(\lambda j)^{j-1}}{j!}\,,
\] 
for $j\in\mathbb{N}=\{1,2,\ldots\}$. We note from \cite{haight60} that $\mathbb{E}[Z]=(1-\lambda)^{-1}$ and $\mbox{Var}(Z)=\lambda(1-\lambda)^{-3}$.

Given the relationship between branching processes and queueing systems, in which customers arriving during a particular service period can be thought of as the offspring of the customer currently in service (see page 284 of \cite{asmussen03} for details), the relationship (\ref{eq:borelrep}) shows that we may also think of the Borel distribution as describing the number of customers served in a busy period of an M/D/1 queue, where arrivals occur at the points of a Poisson process of rate $\lambda<1$ and service times are deterministic (of length 1). It is thus natural to consider the Borel distribution as an approximation for the number of customers served in a busy period of an M/G/1 queue, with a general distribution of service times (with mean 1) which are `not too far from' deterministic.

It is also natural to consider the Borel distribution as a limit or approximation in some settings where structure analogous to (\ref{eq:borelrep}) appears. For example, consider an Erd\H{o}s--R\'enyi random graph with $n$ vertices, where each pair of vertices is connected by an edge independently with probability $\lambda/n$, for some constant $\lambda<1$. In the large-$n$ limit, the locally tree-like behaviour of this random graph is well-known, and as a consequence gives a limiting Borel distribution for the size of a typical connected component. See, for example, \cite{janson08} for a detailed discussion of this, and \cite{vonbahr80,ball95} for  a discussion of the closely related Borel limit for the total number of infected individuals in a Reed--Frost epidemic model, including some explicit error bounds in the corresponding approximation. 

In this note we will explore properties of the Borel distribution, with two objectives in mind. The first is to develop the tools necessary in order to apply Stein's method for probability approximation to problems in which the Borel distribution is a natural limiting or approximating object. Stein's method is a powerful technique in a wide variety of probabilistic approximation settings; see \cite{ross11} and references therein for a general introduction. Our second objective is to establish concentration inequalities for the Borel distribution.   

Our starting point for both these directions is a representation for the size-biased version of $Z\sim\mbox{Borel}(\lambda)$; see the definition (\ref{eq:sb}) of size biasing and the representation (\ref{eq:borelsizebias}) below. Size biasing has previously been used to investigate the Borel distribution by Aldous and Pitman \cite{aldous98}, who used a representation of the mass function of $Z$ in terms of its size-biased version to give elegant proofs of various properties of $Z$. They also derive and apply an elegant representation of the size-biased version of $Z$ as a geometric sum (see Remark \ref{rem:aldous} below). Our work is complementary to \cite{aldous98}, using a different representation of the size-biased version of $Z$ and with different aims.

We will develop Stein's method for approximation by the Borel distribution in Section \ref{sec:stein}, which we illustrate with an application to the approximation of the number of customers served in the busy period of an M/G/1 queue in Section \ref{sec:q}. Our work here leaves open several questions for future research. For example, our results allow us to consider only a limited range of values of $\lambda$, rather than all $\lambda<1$. This seems to be a result of the choice of Stein equation we make here (see Section \ref{sec:stein} below for further details), but it is a restriction we have been unable to remove and it remains an open problem to derive results applicable for all $\lambda<1$. Finally, in Section \ref{sec:concentration} we derive concentration inequalities for the Borel distribution.

\section{Stein's method for Borel approximation}\label{sec:stein}

In this section we establish a general bound for Borel approximation using Stein's method; our main result is Theorem \ref{thm:stein} below. This is the first use of Stein's method for Borel approximation. We refer the reader to \cite{ross11} and references therein for background on Stein's method, a powerful technique for deriving error bounds in distributional approximations in a wide variety of settings. An application of our results is given in Section \ref{sec:q}.

Throughout this section we let $Z\sim\mbox{Borel}(\lambda)$ and let $W$ be any positive integer-valued random variable with $\mathbb{E}[W]=(1-\lambda)^{-1}$ for some $0<\lambda<1$. For any such $W$, we denote by $W^\star$ the size-biased version of $W$, defined by
\begin{equation}\label{eq:sb}
\mathbb{P}(W^\star=j)=\frac{j\mathbb{P}(W=j)}{\mathbb{E}W}\,.
\end{equation}
Note that $W^\star$ satisfies $\mathbb{E}[f(W^\star)]=\frac{\mathbb{E}[Wf(W)]}{\mathbb{E}W}$ for any $f:\mathbb{N}\to\mathbb{R}$ for which these expectations exist; this latter formula is used to define the size-biased version of non-negative random variables which are not integer-valued.

We use the representation (\ref{eq:borelrep}) to write a distributional equality for the size-biased version $Z^\star$, which we can then use to define an appropriate Stein equation to use as the basis of Stein's method for Borel approximation. This distributional equality will also be the starting point for the concentration inequalities we derive in Section \ref{sec:concentration}.

Let $I$ be a Bernoulli random variable, independent of all else, with expectation $\lambda$. Noting that $\mathbb{E}Z=(1-\lambda)^{-1}$, results from Sections 2.2 and 2.4 of \cite{arratia19} on size biasing sums and random sums of random variables give us that
\begin{equation} \label{eq:borelsizebias}
Z^\star\stackrel{d}{=}(1-I)Z+I(Z+Z^\star)\,,
\end{equation}
where $Z$ and $Z^\star$ on the right-hand side are independent. To see this, note that when size biasing the sum of the two terms on the right-hand side of (\ref{eq:borelrep}), we choose one of these two terms (each with probability proportional to its mean) and replace it with a size-biased version; the random variable $I$ is an indicator that we chose to size bias the term $\sum_{i=1}^\xi Z_i$ here. The size-biased version of this latter sum may be written as $Z^\star+\sum_{i=1}^{\xi^\star-1}Z_i$ by results from \cite{arratia19}. Finally, we note that for a Poisson random variable $\xi$ we have that $\xi^\star-1$ is equal in distribution to $\xi$. See Sections 2.2 and 2.4 of \cite{arratia19} for further details and formal statements of these rules for size biasing.

\begin{remark}\label{rem:aldous}
\emph{An alternative representation of $Z^\star$ was given by Aldous and Pitman \cite{aldous98}, who showed in their Corollary 32 that $Z^\star\stackrel{d}{=}Z_1+\cdots+Z_\eta$, where $Z_1,Z_2,\ldots$ are independent copies of $Z$, and $\eta$ has a geometric distribution (independent of the $Z_i$) with mass function $\mathbb{P}(\eta=j)=(1-\lambda)\lambda^{j-1}$ for $j=1,2,\ldots$. For many of our purposes here, the representation (\ref{eq:borelsizebias}) is more directly useful than this one. We comment a little further on this below.}
\end{remark}

The representation (\ref{eq:borelsizebias}) motivates us to define the following Stein equation to compare $W$ and $Z$: for each bounded $h:\mathbb{N}\to\mathbb{R}$, we let $f_h:\mathbb{N}\to\mathbb{R}$ be the solution of the equation
\begin{equation} \label{eq:borelstein}
h(k)-\mathbb{E}[h(Z)]=(1-\lambda)(k-1)f_h(k)-\lambda(1-\lambda)k\sum_{i=1}^\infty f_h(i+k)q(i)\,,
\end{equation}
where $q(i)=\mathbb{P}(Z=i)$, and we define $f_h(1)=0$ for all $h$. Such an equation is the starting point for applying Stein's method to derive error bounds in the approximation of $W$ by $Z$. We will first see how such error bounds follow from (\ref{eq:borelstein}), before using the remainder of this section to bound the solution $f_h$ to (\ref{eq:borelstein}). Using (\ref{eq:borelstein}), we have that
\begin{align}\label{eq:borelcalc}
\nonumber\mathbb{E}[h(W)]-\mathbb{E}[h(Z)]
&=(1-\lambda)\mathbb{E}[(W-1)f_h(W)]-\lambda(1-\lambda)\mathbb{E}\left[W\sum_{i=1}^\infty f_h(i+W)q(i)\right]\\
\nonumber&=\mathbb{E}[f_h(W^\star)]-(1-\lambda)\mathbb{E}[f_h(W)]-\lambda\mathbb{E}[f_h(Z+W^\star)] \\ &= \mathbb{E}[f_h(W^\star)]-\mathbb{E}[f_h((1-I)W+I(Z+W^\star))]\,,
\end{align}
where $Z$ and $W^\star$ are independent. Our error bounds will be in terms of the total variation distance, defined by
\begin{multline*}
d_{TV}(\mathcal{L}(W),\mathcal{L}(Z))=\sup_{A\subseteq\mathbb{N}}|\mathbb{P}(W\in A)-\mathbb{P}(Z\in A)|\\
=\frac{1}{2}\sup_{\substack{h:\mathbb{N}\to\mathbb{R}\\ \lVert h\rVert\leq1}}|\mathbb{E}h(W)-\mathbb{E}h(Z)|
=\inf_{(W,Z)}\mathbb{P}(W\not=Z)\,,
\end{multline*}
where $\lVert\cdot\rVert$ is the supremum norm, and the infimum is taken over all couplings of $(W,Z)$. Hence, from (\ref{eq:borelcalc}), 
\begin{align}
\nonumber d_{TV}(\mathcal{L}(W),\mbox{Borel}(\lambda))&=\frac{1}{2}\sup_{\lVert h\rVert\leq1}|\mathbb{E}[f_h(W^\star)]-\mathbb{E}[f_h((1-I)W+I(Z+W^\star))]|\\
\label{eq:borelTV}&\leq\sup_{\|h\|\leq1}\|f_h\|d_{TV}(\mathcal{L}(W^\star),\mathcal{L}((1-I)W+I(Z+W^\star)))\,.
\end{align}
From the distributional equality (\ref{eq:borelsizebias}), it is clear that this upper bound is zero if $W$ and $Z$ are equal in distribution.

We now establish that the solution $f_h$ to the equation (\ref{eq:borelstein}) is bounded. Recall that we have defined $f_h(1)=0$. For $k\geq2$, we are motivated by the representation used by \cite{bcl92} (see Equation (6) therein) for the solution of a similar Stein equation for compound Poisson approximation to write 
\begin{equation}\label{eq:steinrep}
f_h(k)=\sum_{m=k}^\infty a_{k,m}h_Z(m)\,, 
\end{equation}
for some coefficients $a_{k,m}$ which we will bound below, where $h_Z(k)=(1-\lambda)^{-1}(h(k)-\mathbb{E}[h(Z)])$, which satisfies
\begin{equation} \label{eq:borelstein2}
h_Z(k)=(k-1)f_h(k)-\lambda k\sum_{i=1}^\infty f_h(i+k)q(i)\,.
\end{equation}
Since we have $\lVert h_Z\rVert\leq(1-\lambda)^{-1}$ for $h$ with $\lVert h\rVert\leq1$, it follows that, for each such $h$ and $k\geq2$,
\begin{equation}\label{eq:coeff}
|f_h(k)|\leq\frac{1}{1-\lambda}\sum_{m=k}^\infty|a_{k,m}|\,.
\end{equation}
We therefore bound $f_h$ by first bounding the coefficients $a_{k,m}$.
\begin{lemma}\label{lem:bound}
For $k\geq2$, $a_{kk}=(k-1)^{-1}$ and $|a_{k,k+j}|\leq\frac{j\lambda q(j)}{k-1}$ for $j \ge 1$.
\end{lemma}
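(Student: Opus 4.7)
The plan is to substitute the ansatz (\ref{eq:steinrep}) into the Stein equation (\ref{eq:borelstein2}) and to read off the $a_{k,m}$ by matching the coefficient of each $h_Z(m)$. After swapping the order of summation, the right-hand side of (\ref{eq:borelstein2}) becomes
\[
\sum_{m\ge k}\Big[(k-1)a_{k,m} - \lambda k\,\mathbf{1}\{m>k\}\sum_{i=1}^{m-k}q(i)a_{k+i,m}\Big]h_Z(m).
\]
Comparing with $h_Z(k)$ on the left forces $a_{k,k}=1/(k-1)$, establishing the first assertion, and gives the recursion
\[
(k-1)\,a_{k,k+j} \;=\; \lambda k\sum_{i=1}^{j} q(i)\,a_{k+i,\,k+j}\qquad(j\ge 1).
\]

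I would then prove the inequality by induction on $j$. The base case $j=1$ is a direct one-line calculation: $a_{k,k+1} = \lambda k q(1) a_{k+1,k+1}/(k-1) = \lambda q(1)/(k-1)$, which matches the bound. For the inductive step with $j\ge 2$, I split the sum at $i=j$, use $|a_{k+j,k+j}|=1/(k+j-1)$ for the final term, and apply the hypothesis $|a_{k+i,(k+i)+(j-i)}|\le (j-i)\lambda q(j-i)/(k+i-1)$ to the others. Because $k/(k+i-1)\le 1$ for every $i\ge 1$, this yields
\[
(k-1)\,|a_{k,k+j}| \;\le\; \lambda^{2}\sum_{i=1}^{j-1}(j-i)\,q(i)\,q(j-i) \;+\; \lambda q(j).
\]

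The crucial ingredient to close the induction is the Borel convolution identity
\[
\lambda\sum_{i=1}^{n-1}(n-i)\,q(i)\,q(n-i) \;=\; (n-1)\,q(n)\qquad(n\ge 1),
\]
which I would derive directly from the distributional equality (\ref{eq:borelsizebias}) by evaluating both sides at the point $n$, using $\mathbb{P}(Z^{\star}=n)=n(1-\lambda)q(n)$, and cancelling $(1-\lambda)$. Applied with $n=j$, this identity turns the previous display into $\lambda(j-1)q(j)+\lambda q(j) = j\lambda q(j)$, giving exactly $|a_{k,k+j}|\le j\lambda q(j)/(k-1)$. The main obstacle is spotting that this particular convolution identity is what is needed; everything else is bookkeeping, and the looseness introduced by $k/(k+i-1)\le 1$ is harmless because the resulting bound is already tight at $j=1$.
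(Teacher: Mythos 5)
Your proof is correct, and its skeleton (substitute the ansatz, compare coefficients of $h_Z(m)$ to get $a_{k,k}=(k-1)^{-1}$ and the recursion $(k-1)a_{k,k+j}=\lambda k\sum_{i=1}^j q(i)a_{k+i,k+j}$, then induct on $j$ with base case $j=1$ and the crude bound $k/(k+i-1)\le 1$) is exactly the paper's. Where you genuinely diverge is in how the induction is closed. The paper substitutes the explicit Borel mass function into $\sum_{i=1}^{j-1}(j-i)\lambda q(i)q(j-i)+q(j)$ and evaluates the resulting sum by Abel's generalization of the binomial theorem (citing Riordan), which amounts to the identity $\sum_{i=1}^{j-1}\binom{j}{i}i^{i-1}(j-i)^{j-i}=(j-1)j^{j-1}$. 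You instead use the convolution identity $\lambda\sum_{i=1}^{n-1}(n-i)q(i)q(n-i)=(n-1)q(n)$, derived by evaluating the size-bias relation (\ref{eq:borelsizebias}) at the point $n$ and using $\mathbb{P}(Z^\star=n)=n(1-\lambda)q(n)$; this is a valid derivation since (\ref{eq:borelsizebias}) is established before the lemma, and the identity is precisely equivalent to the Abel identity the paper invokes. What your route buys is a self-contained, probabilistic closing step that avoids the combinatorial citation and makes transparent why the bound is exactly $j\lambda q(j)/(k-1)$ (it is tight at every stage except the harmless $k/(k+i-1)\le 1$); what the paper's route buys is a purely computational verification that does not lean on the size-bias representation, keeping the analytic bound on $f_h$ independent of that probabilistic input.
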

\begin{proof}
Substituting the representation (\ref{eq:steinrep}) into (\ref{eq:borelstein2}), we have that
\begin{equation} \label{eq:lem21}
h_Z(k)=(k-1)\sum_{m=k}^\infty a_{k,m}h_Z(m)-\lambda k\sum_{m=k+1}^\infty h_Z(m)\sum_{i=1}^{m-k}a_{k+i,m}q(i)\,,
\end{equation}
for each $k\geq2$. Comparing coefficients of $h_Z(k)$ on each side of (\ref{eq:lem21}) we have that $a_{k,k}=(k-1)^{-1}$, as required. Comparing coefficients of $h_Z(k+j)$ on each side of (\ref{eq:lem21}) for each $j\geq1$, we obtain
\begin{equation} \label{eq:lem22}
a_{k,k+j}=\frac{k\lambda}{k-1}\sum_{i=1}^ja_{k+i,k+j}q(i)\,,
\end{equation}
from which it follows that $a_{k,k+1}=\frac{\lambda q(1)}{k-1}$. This gives the required result in the case $j=1$, which acts as the base case in a proof by induction on $j$. To that end, we assume that $a_{k,k+l}\leq\frac{l\lambda q(l)}{k-1}$ for each $l<j$, and proceed by noting that (\ref{eq:lem22}) combined with this inductive assumption gives us
\begin{align*}
|a_{k,k+j}|&\leq\frac{k\lambda}{k-1}\left(\frac{q(j)}{k+j-1}+\sum_{i=1}^{j-1}\frac{(j-i)\lambda q(j-i)q(i)}{k+i-1}\right)\\
&\leq\frac{\lambda}{k-1}\left(q(j)+\sum_{i=1}^{j-1}e^{-\lambda j}\lambda^{j-1}\frac{i^{i-1}(j-i)^{j-i}}{(j-i)!i!}\right)\\
&=\frac{\lambda q(j)}{k-1}\left(1+\sum_{i=1}^j\binom{j}{i}\frac{i^{i-1}(j-i)^{j-i}}{j^{j-1}}\right)
=\frac{j\lambda q(j)}{k-1}\,,
\end{align*}
where the final equality follows from Abel's generalization of the binomial theorem (see Section 1.5 of \cite{riordan68}).
\end{proof}
\begin{lemma}\label{lem:steinfactor}
For each $\lVert h\rVert\leq1$, $\lVert f_h\rVert\leq(1-\lambda)^{-2}$.
\end{lemma}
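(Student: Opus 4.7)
The plan is to plug the coefficient bounds from Lemma \ref{lem:bound} directly into the representation \eqref{eq:coeff} and collapse the resulting series using $\mathbb{E}[Z] = (1-\lambda)^{-1}$.

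First I would fix $h$ with $\lVert h\rVert\le1$ and $k\ge2$, and use Lemma \ref{lem:bound} inside \eqref{eq:coeff} to write
\[
|f_h(k)|\;\le\;\frac{1}{1-\lambda}\sum_{m=k}^\infty|a_{k,m}|\;\le\;\frac{1}{1-\lambda}\left(\frac{1}{k-1}+\sum_{j=1}^\infty\frac{j\lambda q(j)}{k-1}\right).
\]
Then I would factor out $(k-1)^{-1}$ and recognize the remaining sum as a moment of $Z$, namely $\sum_{j=1}^\infty j q(j)=\mathbb{E}[Z]=(1-\lambda)^{-1}$, giving
\[
|f_h(k)|\;\le\;\frac{1}{(1-\lambda)(k-1)}\left(1+\frac{\lambda}{1-\lambda}\right)=\frac{1}{(k-1)(1-\lambda)^2}.
\]

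Since $k\ge2$ forces $(k-1)^{-1}\le1$, this yields $|f_h(k)|\le(1-\lambda)^{-2}$ for all $k\ge2$, and the case $k=1$ is trivial because $f_h(1)=0$ by definition. Taking the supremum over $k$ finishes the proof. There is no real obstacle here: all the combinatorial work has already been absorbed into Lemma \ref{lem:bound}, so this lemma is essentially a one-line computation plus the observation that $\lambda\mathbb{E}[Z]=\lambda/(1-\lambda)$ combines cleanly with the constant term to produce $(1-\lambda)^{-1}$.
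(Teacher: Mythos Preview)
Your proof is correct and follows exactly the same route as the paper: plug the bounds of Lemma~\ref{lem:bound} into \eqref{eq:coeff}, use $\sum_j jq(j)=\mathbb{E}[Z]=(1-\lambda)^{-1}$, and bound $(k-1)^{-1}\le1$. You have simply written out the intermediate step $1+\lambda/(1-\lambda)$ explicitly where the paper jumps directly to $(1-\lambda)^{-2}(k-1)^{-1}$.
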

\begin{proof}
Combining (\ref{eq:coeff}) with Lemma \ref{lem:bound}, we have that, for each $h$ with $\lVert h\rVert\leq1$ and each $k\geq2$,
\[
|f_h(k)|\leq\frac{1}{(1-\lambda)(k-1)}\left(1+\lambda\sum_{j=1}^\infty jq(j)\right)=\frac{1}{(1-\lambda)^2(k-1)}\leq\frac{1}{(1-\lambda)^2}\,,
\]
as required.
\end{proof}

Combining (\ref{eq:borelTV}) with Lemma \ref{lem:steinfactor}, we have proved the following.
\begin{theorem}\label{thm:stein}
Let $W$ be a positive integer-valued random variable with $\mathbb{E}W=(1-\lambda)^{-1}$ for some $0<\lambda<1$, and $W^\star$ denote its size-biased version. Then
\[
d_{TV}(\mathcal{L}(W),\mbox{Borel}(\lambda))\leq \frac{1}{(1-\lambda)^2} d_{TV}(\mathcal{L}(W^\star),\mathcal{L}((1-I)W+I(Z+W^\star)))\,,
\]
where $I$ is a Bernoulli random variable with $\mathbb{P}(I=1)=\lambda$, $Z\sim\mbox{Borel}(\lambda)$, and all random variables are independent.
\end{theorem}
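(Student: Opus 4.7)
The plan is to recognise that Theorem \ref{thm:stein} is an immediate corollary of the material already assembled in this section; the serious work has been done in Lemmas \ref{lem:bound} and \ref{lem:steinfactor}, and the theorem itself amounts to combining those lemmas with the Stein-type identity derived earlier. Concretely, I would assemble three ingredients: the test-function characterisation of total variation distance, the calculation (\ref{eq:borelcalc}), and the Stein factor bound from Lemma \ref{lem:steinfactor}.

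First I would invoke the dual characterisation $d_{TV}(\mathcal{L}(W),\mbox{Borel}(\lambda))=\frac{1}{2}\sup_{\lVert h\rVert\leq 1}|\mathbb{E}h(W)-\mathbb{E}h(Z)|$, so that it suffices to control this supremum. Then, for each fixed $h$ with $\lVert h\rVert\leq 1$, I would invoke the three-line computation in (\ref{eq:borelcalc}); that chain substitutes the Stein equation (\ref{eq:borelstein}) into $\mathbb{E}[h(W)]-\mathbb{E}[h(Z)]$ and applies the defining identity $\mathbb{E}[Wf(W)]=(\mathbb{E}W)\mathbb{E}[f(W^\star)]$ of size biasing to each occurrence of $Wf_h(W)$ and $Wf_h(\cdot+W)$, re-expressing the difference in the suggestive form $\mathbb{E}[f_h(W^\star)]-\mathbb{E}[f_h((1-I)W+I(Z+W^\star))]$. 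Applying the elementary inequality $|\mathbb{E}g(X)-\mathbb{E}g(Y)|\leq 2\lVert g\rVert d_{TV}(\mathcal{L}(X),\mathcal{L}(Y))$ with $g=f_h$ then delivers (\ref{eq:borelTV}).

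Finally, Lemma \ref{lem:steinfactor} supplies the uniform Stein factor $\lVert f_h\rVert\leq(1-\lambda)^{-2}$ over all admissible $h$, and substituting this into (\ref{eq:borelTV}) yields the stated bound. There is no remaining obstacle at this stage: all the analytic difficulty was concentrated in the coefficient bound of Lemma \ref{lem:bound}, whose inductive step relies on Abel's generalization of the binomial theorem, and which in turn feeds Lemma \ref{lem:steinfactor}. Relative to that earlier work, the theorem is a one-line combination.
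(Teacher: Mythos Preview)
Your proposal is correct and matches the paper's own argument essentially line for line: the paper states that Theorem \ref{thm:stein} follows by combining (\ref{eq:borelTV}) with Lemma \ref{lem:steinfactor}, and you have spelled out precisely that combination, including the test-function characterisation of $d_{TV}$ and the elementary bound $|\mathbb{E}g(X)-\mathbb{E}g(Y)|\leq 2\lVert g\rVert d_{TV}(\mathcal{L}(X),\mathcal{L}(Y))$ that underlies (\ref{eq:borelTV}).
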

\begin{remark}\label{rem:stein}
\emph{One unusual feature of the bound given in Theorem \ref{thm:stein}, compared to results using Stein's method in other settings, is the presence of the random variable $Z$ in the upper bound. In the application of Section \ref{sec:q}, this leads directly to a restriction on the values of $\lambda$ we may consider. We have been unable to find an alternative approach to Stein's method for Borel approximation which both yields a Stein equation with a tractable solution and removes this dependence on $Z$ in the upper bound. The complexity of expressions such as $\mathbb{P}(Z=j+1)/\mathbb{P}(Z=j)$ makes it difficult to apply the well-known density approach to deriving Stein equations, or its generalizations such as those detailed in \cite{ley17}. Similarly, the probability generating function  of $Z$ (see \cite{haight60}) does not give a recurrence relation of the form needed to apply the techniques of \cite{upadhye17} for deriving Stein operators. Yet another approach which we have considered is to use the representation  of Remark \ref{rem:aldous} in place of (\ref{eq:borelsizebias}) as the basis for a Stein equation.}

\emph{One way to ease the restrictions on $\lambda$, while employing the same Stein equation, would be to make use of a better bound on $f_h$ than that given by Lemma \ref{lem:steinfactor}. Although this seems to give the best possible uniform bound, its proof makes it clear that it is a poor upper bound for large $k$. Using a tighter upper bound on $|f_h(j)-f_h(k)|$, allowed to depend on $j$ and $k$, in establishing a result analogous to Theorem \ref{thm:stein} would be likely to allow a somewhat increased range of values of $\lambda$ to be considered in applications, at the cost of additional complexity in the analysis.}
\end{remark}

\section{Number of customers served in the busy period of an M/G/1 queue}\label{sec:q}

Consider an M/G/1 queue, with arrivals at the points of a Poisson process of rate $\lambda<1$, and IID service times distributed as the random variable $S$ with $\mathbb{E}S=1$. The condition $\lambda<1$ is necessary and sufficient for stability of the system. Let $N$ be the number of customers served during a busy period of the queue. That is, if a single customer arrives to an otherwise empty system, $N$ represents the number of customers served (including this original customer) before the system is again empty. Let $\nu$ be the number of customers arriving during the service of the customer initiating the busy period. Note that $\nu$ has the mixed Poisson distribution $\nu\sim\mbox{Po}(\lambda S)$. That is, $\nu|S\sim\mbox{Po}(\lambda S)$ with probability 1. Our starting point in Borel approximation for $N$ is the well-known representation
\begin{equation} \label{eq:Nrep}
N\stackrel{d}{=}1+\sum_{i=1}^\nu N_i\,,
\end{equation}
where $N_1,N_2,\ldots$ are independent copies of $N$ which are also independent of $\nu$. In the special case of the M/D/1 queue, where all service times are constant, this reduces to (\ref{eq:borelrep}), showing that the number of customers served in a busy period of an M/D/1 queue has a Borel distribution. Note also that (\ref{eq:Nrep}) implies that $\mathbb{E}[N]=(1-\lambda)^{-1}$.

We will consider the approximation of $N$ by $Z\sim\mbox{Borel}(\lambda)$ to illustrate the application and limitation of the results of Section \ref{sec:stein}. We begin by outlining a more direct argument for bounding the total variation distance between $N$ and $Z$, before then moving on to show an alternative argument based on a simple application of our Theorem \ref{thm:stein}. As we will see, this latter argument yields an error bound of the expected order, but only for a limited range of $\lambda$. This is for the reasons noted in Remark \ref{rem:stein}; overcoming the obstacles noted there is an open problem.

We could use the representations (\ref{eq:borelrep}) and (\ref{eq:Nrep}) to write
\begin{align*}
d_{TV}(\mathcal{L}(N),\mbox{Borel}(\lambda))&=d_{TV}\left(\mathcal{L}\left(\sum_{i=1}^\nu N_i\right),\mathcal{L}\left(\sum_{i=1}^\xi Z_i\right)\right)\\
&\leq d_{TV}(\mbox{Po}(\lambda S),\mbox{Po}(\lambda))+\lambda d_{TV}(\mathcal{L}(N),\mbox{Borel}(\lambda))\,,
\end{align*}
where the inequality uses Corollary 3.1 of \cite{vellaisamy96}. Using Theorem 1.C of \cite{barbour92}, for all $\lambda<1$ we thus have 
\begin{equation}\label{eq:qbd1}
d_{TV}(\mathcal{L}(N),\mbox{Borel}(\lambda))\leq\frac{1}{1-\lambda}d_{TV}(\mbox{Po}(\lambda S),\mbox{Po}(\lambda))\leq\frac{\lambda^2\mbox{Var}(S)}{1-\lambda}\,.
\end{equation}

Alternatively, we can use an approach based on our Theorem \ref{thm:stein}. Similarly to (\ref{eq:borelsizebias}), we use (\ref{eq:Nrep}) to write
\[
N^\star\stackrel{d}{=}(1-I)N+I\left(1+\sum_{i=1}^{\nu^\star-1}N_i+N^\star\right)\,,
\]
where, as before, $I$ is a Bernoulli random variable, independent of all else, such that $\mathbb{P}(I=1)=\lambda$. Then, using Theorem \ref{thm:stein}, conditioning on $I$ and using the representation (\ref{eq:borelrep}) we obtain
\[
d_{TV}(\mathcal{L}(N),\mbox{Borel}(\lambda))\leq\frac{\lambda}{(1-\lambda)^2}d_{TV}\left(\mathcal{L}\left(\sum_{i=1}^{\nu^\star-1}N_i\right),\mathcal{L}\left(\sum_{i=1}^\xi Z_i\right)\right)\,.
\]
Proceeding as we did above by using Corollary 3.1 of \cite{vellaisamy96}, we then have that
\[
d_{TV}(\mathcal{L}(N),\mbox{Borel}(\lambda))\leq\frac{\lambda}{(1-\lambda)^2}\left[d_{TV}(\mathcal{L}(\nu^\star-1),\mbox{Po}(\lambda))+\lambda d_{TV}(\mathcal{L}(N),\mbox{Borel}(\lambda))\right].
\]
This time there is a restriction on the values of $\lambda$ we allow. Gathering both instances of $d_{TV}(\mathcal{L}(N),\mbox{Borel}(\lambda))$ together, we require $\lambda<1/2$ to obtain a meaningful upper bound. Under this assumption,
\begin{equation}\label{eq:qbd2}
d_{TV}(\mathcal{L}(N),\mbox{Borel}(\lambda))\leq\frac{\lambda}{1-2\lambda}d_{TV}(\mathcal{L}(\nu^\star-1),\mbox{Po}(\lambda))
\leq\frac{\lambda^2\mathbb{E}|S^\star-1|}{1-2\lambda}=\frac{\lambda^2\mathbb{E}\left[S|S-1|\right]}{1-2\lambda}\,,
\end{equation}
where the final inequality follows from Theorem 1.C of \cite{barbour92} since $\nu^\star-1\sim\mbox{Po}(\lambda S^\star)$; see Section 2.2 of \cite{arratia19}. 

Each of the bounds (\ref{eq:qbd1}) and (\ref{eq:qbd2}) are of order $O(\lambda^2)$ in $\lambda$. The main difference between them is in the permitted values of $\lambda$; (\ref{eq:qbd1}) holds for all $\lambda<1$, while (\ref{eq:qbd2}) needs $\lambda<1/2$. The dependence on $S$ is also different between the two bounds. If $S$ is supported on the non-negative integers, then $\mathbb{E}|S^\star-1|=\mbox{Var}(S)$, though in general these quantities are different, with $\mbox{Var}(S)$ a little smaller than $\mathbb{E}|S^\star-1|$ in a few examples for which we computed them.

\section{Concentration bounds for the Borel distribution}\label{sec:concentration}

In this section we will derive concentration inequalities for $Z\sim\mbox{Borel}(\lambda)$, motivated by the complexity of the mass function of $Z$, which makes the distribution function of $Z$ difficult to evaluate explicitly. 

It is well-known that the Borel distribution is infinitely divisible, and so Theorem 5.1 of \cite{ghosh09} may be applied to give concentration bounds for $Z$. The application of this result relies on writing $Z^\star$ as $Z+X$, for some non-negative random variable $X$ independent of $Z$. Using the representations of (\ref{eq:borelrep}) and Remark \ref{rem:aldous}, we have that $X=Z_{\xi+1}+\cdots+Z_{\eta}-1$, where we note that $\xi+1$ is stochastically smaller than $\eta$ by Theorem 1.1(g) of \cite{klenke10}. Noting that $\mathbb{E}X=(\mathbb{E}\eta-\mathbb{E}\xi)\mathbb{E}Z-1=\frac{\lambda}{(1-\lambda)^2}$, Theorem 5.1 of \cite{ghosh09} gives the lower tail bound
\begin{equation*}
\mathbb{P}\left(\frac{Z-\mathbb{E}[Z]}{\sqrt{\mbox{Var}(Z)}}\leq -t\right)\leq\exp\left\{-\frac{t^2}{2}\right\} \,,
\end{equation*}
for all $t>0$. We could also use Theorem 5.1 of \cite{ghosh09} to give an upper bound for the right tail of $Z$, though this requires finding an upper bound for $\mathbb{E}[Xe^{\gamma X}]$, where $\gamma>0$ is such that $\mathbb{E}[e^{\gamma Z}]<\infty$. Given the relatively complicated structure of the random variable $X$ here, we instead give an alternative method for deriving an upper tail bound which uses the representation (\ref{eq:borelsizebias}) as a starting point and follows an argument analogous to that of Theorem 5.1 of \cite{ghosh09}. One advantage of this approach is that instead of bounding an expression of the form $\mathbb{E}[Xe^{\gamma X}]$, we need to bound $\mathbb{E}[Z^\star e^{\gamma Z^\star}]$ for some $\gamma>0$ specified later. This is considerably simpler. We have the following.
\begin{theorem}
Let $Z\sim\mbox{Borel}(\lambda)$. Let $\delta>0$ be such that $\lambda-\log(\lambda)-1-\delta>0$ and define
\[
K=\frac{\lambda}{2}\left(\frac{(1-\lambda)e^{-\delta}}{\lambda\sqrt{2\pi}(1-e^{-\delta})^2}+\frac{1}{(1-\lambda)^2}\right)
\]
and $\gamma=\lambda-\log(\lambda)-1-\delta$. Then, for all $t>0$,
\[
\mathbb{P}\left(\frac{Z-\mathbb{E}[Z]}{\sqrt{\mbox{Var}(Z)}}\geq t\right)\leq\left\{\begin{array}{lr}\exp\left\{-\frac{\lambda t^2}{2K(1-\lambda)^2}\right\},&\mbox{if }t<\frac{K\gamma(1-\lambda)^2}{\lambda}\,,\\
\exp\left\{-\gamma t+\frac{K\gamma^2(1-\lambda)^2}{2\lambda}\right\},&\mbox{otherwise}\,.
\end{array}\right.
\]
\end{theorem}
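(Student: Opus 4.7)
The plan is to prove the tail bound via Chernoff's inequality, controlling the moment generating function $m(\theta) = \mathbb{E}[e^{\theta(Z-\mathbb{E}[Z])}]$ through a differential equation derived from the size-bias relation (\ref{eq:borelsizebias}). The size-bias identity $\mathbb{E}[Zf(Z)] = \mathbb{E}[Z]\,\mathbb{E}[f(Z^\star)]$ applied to $f(z)=e^{\theta z}$ yields $M_Z'(\theta) = (1-\lambda)^{-1} M_{Z^\star}(\theta)$, while taking MGFs of both sides of (\ref{eq:borelsizebias}) and rearranging gives $M_{Z^\star}(\theta) - M_Z(\theta) = \lambda M_Z(\theta)(M_{Z^\star}(\theta)-1)$. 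Combining,
\[ (\log m)'(\theta) = \frac{\lambda}{1-\lambda}\bigl(M_{Z^\star}(\theta) - 1\bigr). \]

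The crucial step is integrating this ODE to a quadratic upper bound. Note that $M'_{Z^\star}(\theta) = \mathbb{E}[Z^\star e^{\theta Z^\star}]$ is convex on the domain of finiteness, since its second derivative $\mathbb{E}[(Z^\star)^3 e^{\theta Z^\star}]$ is nonnegative. The trapezoidal inequality for convex integrands then yields, for $\theta \in [0,\gamma]$,
\[ M_{Z^\star}(\theta)-1 = \int_0^\theta M'_{Z^\star}(s)\,ds \leq \tfrac{\theta}{2}\bigl(M'_{Z^\star}(0) + M'_{Z^\star}(\theta)\bigr) \leq \tfrac{\theta}{2}\bigl(\mathbb{E}[Z^\star] + M'_{Z^\star}(\gamma)\bigr), \]
using monotonicity of $M'_{Z^\star}$ in the last step. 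Substituting into the ODE and integrating produces $\log m(\theta) \leq K\theta^2/(2(1-\lambda))$ on $[0,\gamma]$, with $K = (\lambda/2)(\mathbb{E}[Z^\star] + M'_{Z^\star}(\gamma))$. The identity $\mathbb{E}[Z^\star] = \mathbb{E}[Z^2]/\mathbb{E}[Z] = 1/(1-\lambda)^2$ recovers the second summand of $K$ in the theorem.

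Next I would control $M'_{Z^\star}(\gamma)$ explicitly via Stirling. Writing $M'_{Z^\star}(\gamma) = (1-\lambda)\sum_{j\geq 1} j^2 q(j) e^{\gamma j}$ with $q(j) = e^{-\lambda j}(\lambda j)^{j-1}/j!$ and applying $j! \geq j^j e^{-j}\sqrt{2\pi j}$ gives $j^2 q(j)e^{\gamma j} \leq \lambda^{-1}(2\pi)^{-1/2}\sqrt{j}\,\rho^j$, where $\rho = \lambda e^{1-\lambda+\gamma}$. The choice $\gamma = \lambda - \log\lambda - 1 - \delta$ is dictated precisely here: it produces $\rho = e^{-\delta} \in (0,1)$, ensuring convergence. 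Using $\sqrt{j}\leq j$ and $\sum_{j\geq 1} j e^{-\delta j} = e^{-\delta}/(1-e^{-\delta})^2$ yields $M'_{Z^\star}(\gamma) \leq (1-\lambda)e^{-\delta}/(\lambda\sqrt{2\pi}(1-e^{-\delta})^2)$, which recovers the first summand of $K$ exactly.

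To conclude, Markov's inequality gives $\mathbb{P}(Z-\mathbb{E}[Z] \geq a) \leq \exp\bigl\{-\theta a + K\theta^2/(2(1-\lambda))\bigr\}$ for $\theta \in [0,\gamma]$; optimizing over $\theta$ produces a sub-Gaussian bound when the unconstrained minimizer lies in $[0,\gamma]$ and the Bernstein-type linear bound obtained at $\theta = \gamma$ otherwise. Translating $a = t\sqrt{\mbox{Var}(Z)}$ and simplifying with $\mbox{Var}(Z) = \lambda/(1-\lambda)^3$ delivers the two-regime form stated. I expect the Stirling bound on $M'_{Z^\star}(\gamma)$ to be the main obstacle: the size-bias representation admits no tractable closed form for the MGF of $Z^\star$, so one must dominate $\sum j^2 q(j) e^{\gamma j}$ termwise by a geometric series, and this is exactly what forces $\gamma$ to sit strictly inside the MGF's convergence radius $\lambda - \log\lambda - 1$ through the $\delta$ correction.
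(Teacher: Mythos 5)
Your proposal follows essentially the same route as the paper: both rest on the size-biased representation (\ref{eq:borelsizebias}), the same Stirling estimate for $\mathbb{E}[Z^\star e^{\gamma Z^\star}]$ (your $M_{Z^\star}'(\gamma)$), the identity $\mathbb{E}[Z^\star]=1/(1-\lambda)^2$, and a Chernoff argument from a quadratic bound on the log-MGF with exactly the paper's constant $K$. Your only real variation is cosmetic: you extract the exact relation $(\log m)'(\theta)=\tfrac{\lambda}{1-\lambda}\bigl(M_{Z^\star}(\theta)-1\bigr)$ from the MGF factorization and then apply the Hermite--Hadamard (trapezoid) inequality to the convex, increasing function $M_{Z^\star}'$, whereas the paper applies the pointwise inequality $e^y-e^x\le\tfrac12(y-x)(e^y+e^x)$ inside the expectation; both routes land on $\log m(\theta)\le K\theta^2/(2(1-\lambda))$ for $\theta\in(0,\gamma)$, which is the paper's (\ref{eq:conc1}) after integration.

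The one step you should tighten is the final optimization, which you dispatch with ``delivers the two-regime form stated.'' If the cap $\theta\le\gamma$ is imposed on the tilt of $Z$ itself (which is what your MGF bound, like the paper's (\ref{eq:conc1}), actually licenses, since both finiteness of $m$ and the monotonicity step $M_{Z^\star}'(\theta)\le M_{Z^\star}'(\gamma)$ require it), then minimizing $-\theta t\sqrt{\mbox{Var}(Z)}+K\theta^2/(2(1-\lambda))$ over $\theta\in[0,\gamma]$ gives the stated Gaussian exponent $-\lambda t^2/(2K(1-\lambda)^2)$, but with threshold $t<K\gamma\sqrt{(1-\lambda)/\lambda}$ and, in the second regime, exponent $-\gamma\sqrt{\mbox{Var}(Z)}\,t+K\gamma^2/(2(1-\lambda))$; this is the theorem's display with $\gamma$ replaced throughout by $\gamma\sqrt{\mbox{Var}(Z)}$, not the display itself. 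The paper reaches the literal statement by rewriting the Chernoff bound in terms of the standardized tilt, as in (\ref{eq:conc2}) following Ghosh--Goldstein, and then imposing the cap $\theta<\gamma$ on that standardized variable, i.e.\ on $\theta\sqrt{\mbox{Var}(Z)}$ in your parametrization. So before asserting the exact constants of the theorem you must reconcile which variable the cap $\gamma$ applies to; as written, your last sentence claims a match that your own parametrization does not quite produce.
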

\begin{proof}
Let $m(\theta)=\mathbb{E}[e^{\theta Z}]$ for $\theta\in\mathbb{R}$. Then, using the well-known fact that $j!\geq\sqrt{2\pi}j^{j+\frac{1}{2}}e^{-j}$ and noting that $\delta>0$,
\begin{multline*}
\mathbb{E}[Z^\star e^{\gamma Z^\star}]=\frac{\mathbb{E}[Z^2 e^{\gamma Z}]}{\mathbb{E}[Z]}=(1-\lambda)\sum_{j=1}^\infty\frac{e^{(\gamma-\lambda)j}\lambda^{j-1}j^{j+1}}{j!}\\
\leq\frac{(1-\lambda)}{\lambda\sqrt{2\pi}}\sum_{j=1}^\infty\sqrt{j}e^{-\delta j}\leq\frac{(1-\lambda)}{\lambda\sqrt{2\pi}}\sum_{j=1}^\infty j e^{-\delta j}=\frac{(1-\lambda)e^{-\delta}}{\lambda\sqrt{2\pi}(1-e^{-\delta})^2}\,.
\end{multline*}
Combining this with (\ref{eq:borelsizebias}) and the inequality $e^y-e^x\leq\frac{1}{2}(y-x)(e^y+e^x)$, we get that, for all $\theta\in(0,\gamma)$,
\begin{align}
\nonumber\mathbb{E}[e^{\theta Z^\star}-e^{\theta Z}]&=\mathbb{E}[e^{\theta[(1-I)Z+I(Z+Z^\star)]}-e^{\theta Z}]=\lambda\mathbb{E}[e^{\theta(Z+Z^\star)}-e^{\theta Z}]\leq\frac{\lambda\theta}{2}\mathbb{E}[Z^\star(e^{\theta Z^\star}+1)e^{\theta Z}]\\
\nonumber&=\frac{\lambda\theta m(\theta)}{2}\left(\mathbb{E}[Z^\star e^{\theta Z^\star}]+\mathbb{E}[Z^\star]\right)\leq\frac{\lambda\theta m(\theta)}{2}\left(\mathbb{E}[Z^\star e^{\gamma Z^\star}]+\mathbb{E}[Z^\star]\right)\\
\label{eq:conc1}&\leq\frac{\lambda\theta m(\theta)}{2}\left(\frac{(1-\lambda)e^{-\delta}}{\lambda\sqrt{2\pi}(1-e^{-\delta})^2}+\frac{\mathbb{E}[Z^2]}{\mathbb{E}[Z]}\right)=K\theta m(\theta).
\end{align}
With a bound of the form (\ref{eq:conc1}), the argument in the proof of Theorem 5.1 of \cite{ghosh09} shows that $m^\prime(\theta)\leq\mathbb{E}[Z](1+K\theta)m(\theta)$ for any $\theta\in(0,\gamma)$, and that, for any $t>0$,
\begin{equation}\label{eq:conc2}
\mathbb{P}\left(\frac{Z-\mathbb{E}[Z]}{\sqrt{\mbox{Var}(Z)}}\geq t\right)\leq \exp\left\{-\theta t+\frac{K\mathbb{E}[Z]\theta^2}{2\mbox{Var}(Z)}\right\}=\exp\left\{-\theta t+\frac{K(1-\lambda)^2\theta^2}{2\lambda}\right\}\,.
\end{equation}
The exponent on the right-hand side here is minimized at $\theta=\frac{\lambda t}{K(1-\lambda)^2}$. We may make this choice of $\theta$ in (\ref{eq:conc2}) when $\frac{\lambda t}{K(1-\lambda)^2}<\gamma$; otherwise we choose $\theta=\gamma$ in the upper bound (\ref{eq:conc2}). This completes the proof.
\end{proof}


\begin{thebibliography}{99}

\bibitem{aldous98} Aldous,~D. and Pitman,~J. (1998). Tree-valued Markov chains derived from Galton--Watson processes. \emph{Ann. Inst. Henri Poincar\'e} {\bf 34}(5): 637--686.

\bibitem{arratia19} Arratia,~R., Goldstein,~L. and Kochman,~F. (2019). Size bias for one and all. \emph{Probab. Surveys} {\bf 16}: 1--61.

\bibitem{asmussen03} Asmussen,~S. (2003). \emph{Applied Probability and Queues} 2nd Ed. Springer, New York.

\bibitem{vonbahr80} von~Bahr,~B. and Martin-L\"of,~A. (1980). Threshold limit theorems for some epidemic processes. \emph{Adv. Appl. Probab.} {\bf 12}: 319--349.

\bibitem{ball95} Ball,~F. and Donnelly,~P. (1995). Strong approximations for epidemic models. \emph{Stoch. Proc. Appl.} {\bf 55}: 1--21.

\bibitem{bcl92} Barbour,~A.~D., Chen,~L.~H.~Y. and Loh,~W.-L. (1992). Compound Poisson approximation for nonnegative random variables via Stein's method. \emph{Ann. Probab.} {\bf 20}(4): 1843--1866.

\bibitem{barbour92} Barbour,~A.~D, Holst,~L. and Janson,~S. (1992). \emph{Poisson Approximation}. Oxford University Press, Oxford.






\bibitem{ghosh09} Ghosh,~S. and Goldstein,~L (2009). Concentration of measures via size biased couplings. Technical report, available at \texttt{https://arxiv.org/abs/0906.3886}.


\bibitem{haight60} Haight,~F.~A. and Breuer,~M.~A. (1960). The Borel--Tanner distribution. \emph{Biometrika} {\bf 47}(1--2): 143--150.

\bibitem{janson08} Janson,~S. and Luczak,~M.~J. (2008). Susceptibility in subcritical random graphs. \emph{J. Math. Phys.} {\bf 49}: 125207.


\bibitem{klenke10} Klenke,~A. and Mattner,~L. (2010). Stochastic ordering of classical discrete distributions. \emph{Adv. Appl. Probab.} {\bf 42}(2): 392--410.

\bibitem{ley17} Ley,~C., Reinert,~G. and Swan,~Y. (2017). Stein’s method for comparison of univariate distributions. \emph{Probab. Surveys} {\bf 14}: 1--52.



\bibitem{riordan68} Riordan,~J. (1968). \emph{Combinatorial Identities}. Wiley, New York.

\bibitem{ross11} Ross,~N. (2011). Fundamentals of Stein's method. \emph{Probab. Surveys} {\bf 8}: 210--293.

\bibitem{upadhye17} Upadhye,~N.~S., \v{C}ekanavi\v{c}ius,~V. and Vellaisamy,~P. (2017). On Stein operators for discrete approximations. \emph{Bernoulli} {\bf 23}(4A):2828--2859.

\bibitem{vellaisamy96} Vellaisamy,~P. and Chaudhuri,~B. (1996). Poisson and compound Poisson approximations for random sums of random variables. \emph{J. Appl. Probab.} {\bf 33}(1): 127--137.

\end{thebibliography}
\end{document}